\newcommand{\CC}{\mathbb{C}}
\newcommand{\FF}{\mathbb{F}}
\newcommand{\NN}{\mathbb{N}}
\newcommand{\PP}{\mathbb{P}}
\newcommand{\QQ}{\mathbb{Q}}
\newcommand{\cO}{\mathcal{O}}
\newcommand{\ol}{\overline}
\newcommand{\wt}{\widetilde}
\newcommand{\wE}{\wt{E}}
\newcommand{\wF}{\wt{F}}
\newcommand{\Zp}{Z_2}
\newcommand{\Zm}{Z_{-2}}
\newcommand{\wZp}{\wt{Z}_2}
\newcommand{\wZm}{\wt{Z}_{-2}}
\DeclareMathOperator{\Pic}{Pic}
\DeclareMathOperator{\lct}{lct}
\DeclareMathOperator{\Supp}{Supp}
\newtheorem{theorem}{Theorem}[section]
\newtheorem{conjecture}[theorem]{Conjecture}
\newtheorem{proposition}[theorem]{Proposition}
\newtheorem{lemma}[theorem]{Lemma}
\theoremstyle{definition}
\newtheorem{definition}[theorem]{Definition}
\title{A counterexample to Tian’s Stabilization Conjecture\thanks{This is part of the author's PhD work. The author is grateful to his advisor Y.A. Rubinstein for suggesting this problem, and for numerous discussions and guidance. The research is supported by NSF grants DMS-1906370, 2204347, BSF grant 2020329, and Ann G. Wylie Dissertation Fellowship in 2023-24.}}
\author{Chenzi Jin}
\date{\today}
\def\thanks#1{\protected@xdef\@thanks{\@thanks\protect\footnotetext{#1}}}
\begin{document}

\maketitle

\begin{abstract}
It was conjectured by Tian that the global log canonical threshold (known as the $\alpha$-invariant) is equal to the level $k$ log canonical threshold (known as the $\alpha_k$-invariant) for all sufficiently large $k$. A weaker folklore conjecture has been that the invariants $\alpha_k$ are eventually monotone. We provide a counterexample to both conjectures.
\end{abstract}

\section{Introduction}

The purpose of this article is to provide a counterexample to Tian's stabilization conjecture on $\alpha$-invariants.

The global log canonical threshold, or $\alpha$-invariant (Definition \ref{def}), was introduced by Tian to prove existence of K\"ahler--Einstein metrics \cite{Tian87}. It was also independently studied in the algebraic geometry literature by Shokurov \cite{Sho}. While Tian's definition was analytic and Shokurov's definition was algebraic, it was shown by Demailly that they coincide \cite{CS}. The quantized version, namely the $\alpha_k$-invariant (Definition \ref{def}), was also introduced by Tian \cite{Tian88,Tian90}, with the observation that $\liminf_{k\to\infty}\alpha_k=\alpha$. By results of Demailly \cite{CS} and Shi \cite{Shi}, $\alpha=\inf_k\alpha_k=\lim_{k\to\infty}\alpha_k$. In 1988 Tian proposed the following conjecture (refined in 2012), that $\alpha_k$ stabilizes for sufficiently large $k$ \cite[Question 1]{Tian90}, \cite[Conjecture 5.3]{Tian12}.
\begin{conjecture}\label{main conjecture}
    For any ample line bundle $L$ over a projective manifold $X$, there exists $k_0\in\NN_+$ such that for $k\geq k_0$, $\alpha_k(L)=\alpha(L)$.
\end{conjecture}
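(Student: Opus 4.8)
The plan is to exhibit a pair $(X,L)$ for which not only does $\alpha_k(L)$ fail to stabilise but $(\alpha_k(L))_{k\ge 1}$ is not eventually monotone. The starting point is the valuative description of log canonical thresholds: from $\lct(X,D)=\inf_v A_X(v)/v(D)$ (infimum over divisorial valuations $v$, with $A_X(v)$ the log discrepancy) one gets $\alpha_k(L)=\inf_v kA_X(v)/s_k(v)$ where $s_k(v):=\max\{v(D):D\in|kL|\}$, and, combining this with the identity $\alpha=\inf_k\alpha_k$ recalled above, also $\alpha(L)=\inf_v A_X(v)/T_L(v)$, where $T_L(v):=\sup_k s_k(v)/k$ is the pseudoeffective threshold of $v$ along $L$. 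Two soft facts drive everything: $\alpha_k(L)$ takes only finitely many values as $D$ ranges over the bounded system $|kL|$ and is therefore rational (and attained), whereas $\alpha(L)$ is merely an infimum of rationals; and $\alpha_k(L)\ge\alpha(L)$, with equality at level $k$ forcing equality at every multiple of $k$.

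The construction should supply $(X,L)$ together with a single distinguished valuation $v_0$ that computes both $\alpha(L)$ and $\alpha_k(L)$ for all large $k$, so that $\alpha_k(L)=kA_X(v_0)/s_k(v_0)$ and everything reduces to one integer sequence $s_k(v_0)$. The mechanism is already visible on curves: on a smooth curve $C$ of genus $g\ge 2$ with $v_0=\ord_p$, the log canonical threshold of a $\QQ$-divisor is the reciprocal of its largest coefficient, so $\alpha_k(L)=k/s_k$ with $s_k=\max_p\max\{m:h^0(C,kL-mp)\neq 0\}$; an Abel--Jacobi count gives $s_k=k\deg L-(g-1)+\varepsilon_k$ with $0\le\varepsilon_k\le g-1$, and $\varepsilon_k$ reaches its maximal value $g-1$ (so $s_k=k\deg L$, i.e.\ $\alpha_k(L)=\alpha(L)$) exactly when $L-(\deg L)p$ is a $k$-torsion class in $\Pic^0(C)$ for some $p$. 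Judging from the auxiliary objects in the paper --- curves $\Zp,\Zm$ of self-intersection $\pm 2$, an exceptional curve $E$, a fibre $F$, and their strict transforms $\wZp,\wZm,\wE,\wF$ --- it appears that $v_0$ is instead realised on a rational surface built from a minimal ruled surface by an explicit sequence of blow-ups, $v_0$ being a divisorial valuation adapted to the configuration; there $s_k(v_0)$ is computed by Riemann--Roch together with a base-locus analysis of $|kL\otimes\mathfrak a^m|$, where $\mathfrak a$ is the ideal cut out by $v_0$.

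Granting the reduction to $v_0$, both failures follow. In every version $s_k(v_0)$ equals $kT_L(v_0)$ only along a thin set of levels $k$ --- a union of arithmetic progressions, governed by a torsion relation, or, if one instead arranges $T_L(v_0)$ irrational (e.g.\ via a non-square $L^2$ and a general-point valuation), along no level at all --- and is strictly smaller elsewhere. Hence $\alpha_k(L)=kA_X(v_0)/s_k(v_0)$ dips down to $\alpha(L)=A_X(v_0)/T_L(v_0)$ exactly on that thin set and stays strictly above $\alpha(L)$ in between. This contradicts Conjecture~\ref{main conjecture} because $\alpha_k(L)>\alpha(L)$ for infinitely many $k$, and it contradicts eventual monotonicity because each level in the thin set is then a strict local minimum of $(\alpha_k(L))_k$ (in the irrational case the latter instead follows from the equidistribution of the fractional parts $\{kT_L(v_0)\}$).

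The real work is the reduction itself: one must prove the matching lower bound $\alpha_k(L)\ge kA_X(v_0)/s_k(v_0)$ for all large $k$ --- equivalently, that no other divisorial valuation, and no exotic singularity of any member of the positive-dimensional system $|kL|$ (higher cusps, tacnodes, non-reduced components, singularities strung along $\Zp$ or $\Zm$), beats $v_0$ at infinitely many levels and washes out the oscillation. On a curve this is automatic. On the surface it is the heart of the matter and should be carried out by studying a common log resolution level by level, using the rigidity of the explicit curve configuration and the intersection numbers on each successive blow-up --- this is where $\wZp,\wZm,\wE,\wF$ are used.
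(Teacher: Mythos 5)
Your high-level mechanism is on target: find $(X,L)$ with a distinguished prime divisor $v_0$ computing all the thresholds, reduce everything to the integer sequence $s_k(v_0)$, and let an arithmetic obstruction force $s_k(v_0)<kT_L(v_0)$ at infinitely many levels. You also correctly flag that the matching lower bound $\alpha_k(L)\ge kA_X(v_0)/s_k(v_0)$ is ``the heart of the matter.'' But that is exactly what you do not supply, and the suggestion to handle it ``by studying a common log resolution level by level'' is not a proof strategy: a priori there is no common log resolution of the full linear system $|kL|$, the candidate valuations are not bounded in complexity, and the singularities of members of $|kL|$ cannot be classified by bare intersection theory alone.

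The paper's key idea --- which your proposal does not identify --- is that $S$ carries an effective $\CC^*$-action, i.e.\ is a T-variety of complexity one. By S\"uss's result (Theorem~\ref{T-variety}) the infimum defining $\alpha_k$ is attained on torus-invariant divisors, and by Lemma~\ref{T divisor} those are supported on a fixed, finite, simple normal crossings configuration $\wZp,\wZm,\wE_i,\wF_i,E_1,E_2$. Proposition~\ref{nc lct} then gives $\lct(D)=(\max_i a_i)^{-1}$ outright, so the lower bound collapses to an \emph{integer linear program} on the coefficients $a_i$ subject to $D\sim kL$; the parity of $k$ forces $a_{11},a_{12}\le 8k-1$ when $k$ is odd via \eqref{45912}--\eqref{35911}, which is the whole oscillation. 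Your curve-theoretic heuristic (torsion in $\Pic^0$ controlling when $s_k=kT_L$) is a different phenomenon and does not transfer to this example; the obstruction here is Diophantine, coming from integrality of a linear system, not from torsion of a line bundle. Without the T-variety reduction your proposal has no way to rule out that some other valuation, or a highly singular member of $|kL|$, washes out the oscillation --- so the gap is genuine.
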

There is also a folklore conjecture, weaker than Conjecture \ref{main conjecture}, stating that the sequence of $\alpha_k$-invariants is eventually monotone (see \cite[\S1.3]{JR} for a discussion and references).
\begin{conjecture}\label{main conjecture weak}
    For any ample line bundle $L$ over a projective manifold $X$, there exists $k_0\in\NN_+$ such that for $k_0\leq k_1\leq k_2$, $\alpha_{k_1}(L)\geq\alpha_{k_2}(L)$.
\end{conjecture}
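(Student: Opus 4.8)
Since the paper announces a \emph{counterexample}, the task is to refute Conjecture~\ref{main conjecture weak} — and with it Conjecture~\ref{main conjecture} — so what follows is a plan for building the counterexample. It suffices to exhibit one pair $(X,L)$, with $X$ a smooth projective surface and $L$ ample, for which $\alpha_{2m}(L)=\alpha(L)$ and $\alpha_{2m+1}(L)>\alpha(L)$ for every $m\ge1$: then the sequence $(\alpha_k(L))_k$ oscillates up and down infinitely often, so it is not eventually monotone (refuting Conjecture~\ref{main conjecture weak}), and it exceeds $\alpha(L)$ infinitely often, so it does not stabilize (refuting Conjecture~\ref{main conjecture}).

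The mechanism I would use: arrange $(X,L)$ so that the unique effective $\QQ$-divisor computing $\alpha(L)$ has a half-integral coefficient along a \emph{rigid} negative curve $C$, so that it is representable by an honest member of $|kL|$ precisely when $k$ is even. I would look for $X$ among rational surfaces — the notation $\Zm,\Zp$ (the $(-2)$- and $(+2)$-sections) and $\wE,\wF$ in the preamble points to a one-point blow-up of the Hirzebruch surface $\FF_2$ — with $L$ chosen carefully. One first computes $\alpha(L)$: for a suitable rigid negative curve $C$ (a strict transform of $\Zm$, or an exceptional curve $\wE$) one has $\alpha(L)\le A_X(\ord_C)/\tau_C(L)=1/\tau_C(L)$, where $\tau_C(L)=\sup\{t\ge0:L-tC\text{ is pseudoeffective}\}$ is the pseudoeffective threshold, with equality when $\ord_C$ is the extremal valuation; the optimal divisor is then $\Delta_0:=\tau_C(L)\,C+N$, with $N$ the negative part of the Zariski decomposition of $L-\tau_C(L)\,C$. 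By tuning the intersection numbers of $C$ with the remaining negative curves, and the class of $L$, one forces $\tau_C(L)$ to be a rational number of denominator exactly $2$, so that $\Delta_0$ has index $2$.

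The even case is then immediate: for $2\mid k$, $k\Delta_0$ is an integral effective divisor, $\Pic X$ is torsion-free (automatic for a rational surface) so $k\Delta_0\sim kL$, hence $k\Delta_0\in|kL|$ and $\alpha_k(L)\le\lct(X,\tfrac1k(k\Delta_0))=\lct(X,\Delta_0)=\alpha(L)$; together with $\alpha_k(L)\ge\alpha(L)$ this gives $\alpha_k(L)=\alpha(L)$. For odd $k$ one wants strictness. Here $\alpha_k(L)=\min_{D\in|kL|}\lct(X,\tfrac1k D)$, the minimum being attained since log canonical thresholds are lower semicontinuous over the projective space $|kL|$; and each $\tfrac1k D$ is an effective $\QQ$-divisor $\sim_\QQ L$ of index dividing the odd number $k$, hence distinct from $\Delta_0$. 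Therefore $\alpha_k(L)>\alpha(L)$ for odd $k$ \emph{as soon as} $\Delta_0$ is the unique effective $\QQ$-divisor $\sim_\QQ L$ with $\lct(X,\cdot)=\alpha(L)$ — equivalently, as soon as $\ord_C$ is the \emph{strict} minimizer of $v\mapsto A_X(v)/\tau_v(L)$ over all divisorial valuations, where $\tau_v(L)=\sup\{v(D):D\ge0,\ D\sim_\QQ L\}$. Granting this, $\alpha_{2m}(L)=\alpha(L)<\alpha_{2m+1}(L)$ for all $m$, which is exactly what the reduction calls for.

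The main obstacle is precisely this uniqueness / strict-minimality statement — the sharp lower bound for odd $k$. Carrying it out requires a complete analysis of the effective $\QQ$-divisors in the class of $L$. One first reduces the minimization over all divisorial valuations to a finite list — the finitely many negative curves of $X$ together with a bounded family of valuations coming from weighted blow-ups at the points where these curves meet — using intersection theory and general position to keep $\lct$ bounded away from $\alpha(L)$ for everything outside the list. Then for each candidate valuation $v$ one computes $\tau_v(L)$ through Zariski decompositions on suitable birational models and checks the required strict inequality, the delicate cases being the infinitely-near valuations at the singular points of the negative-curve configuration, treated via explicit toric / simple normal crossing local models. In parallel one must verify the ancillary inputs used above: that $L$ can be taken genuinely ample and integral while $\tau_C(L)$ has denominator exactly $2$, and that $\lct(X,\Delta_0)$ really equals $\alpha(L)$ and not just an upper bound for it. Once these are in place the arithmetic is transparent — $\{k:\alpha_k(L)=\alpha(L)\}$ is exactly the set of even integers — so eventual monotonicity fails at arbitrarily large $k$.
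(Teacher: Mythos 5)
Your overall shape is right: the paper does start from $\FF_2$, does produce an $(S,L)$ with $\alpha_{2m}(L)=\alpha(L)$ and $\alpha_{2m+1}(L)>\alpha(L)$, and does trace the phenomenon to an optimal $\QQ$-divisor that ``wants'' a half-integral coefficient. The implication you draw -- that such a pair refutes both Conjectures~\ref{main conjecture} and~\ref{main conjecture weak} -- is correct. But the concrete mechanism you propose for proving strictness at odd $k$ would not work on the paper's actual example, and it is not how the paper argues.

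First, the ``unique effective $\QQ$-divisor $\Delta_0\sim_\QQ L$ with $\lct=\alpha(L)$'' hypothesis fails here. The construction blows up $\ol{E}_1\cap\ol{F}_1$ and $\ol{E}_2\cap\ol{F}_2$ symmetrically and $L$ in \eqref{L def} is invariant under swapping $1\leftrightarrow2$, while the divisor achieving $\alpha(L)=1/8$, namely $\tfrac1kD'=2\wZp+2\wZm+\tfrac92\wE_1+\tfrac92\wF_1+8E_1+\tfrac12\wE_2+\tfrac12\wF_2$, is not symmetric; its image under the swap is a second minimizer. (And the feasible set of torus-invariant $D\sim kL$ is cut out by $8$ equations in $12$ unknowns, so there is a priori a positive-dimensional family near the optimum -- uniqueness would be very delicate to arrange.) The condition you actually need is that \emph{every} minimizer has index $2$, or better, a statement that avoids classifying minimizers at all. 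Second, your diagnosis ``$\tau_C(L)$ has denominator exactly $2$'' is also off: for the extremal curve $C=E_1$ one has $\tau_{E_1}(L)=8\in\ZZ$; the half-integers appear in the negative part $N$ of the Zariski decomposition of $L-8E_1$, not in $\tau_C(L)$ itself.

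Third -- and this is the main gap -- your plan for the odd-$k$ lower bound (``reduce to a finite list of valuations, compute Zariski decompositions on birational models, check strict minimality case by case'') misses the paper's key idea, which is an explicit \emph{integrality obstruction} in a linear program. The paper reduces to torus-invariant effective integral $D\sim kL$ via the complexity-one torus action (Theorem~\ref{T-variety}, Lemma~\ref{alpha_k simp}), writes down the linear constraints \eqref{12}--\eqref{510} coming from intersection numbers, and observes that forcing the $E_1$-coefficient $a_{11}$ to its upper bound $8k$ propagates through \eqref{69}, \eqref{510}, \eqref{45912} to the equation $2a_4=k$, impossible for odd $k$; hence $a_{11},a_{12}\le 8k-1$, giving $\lct(D)\ge(8k-1)^{-1}$ and $\alpha_k(L)\ge k/(8k-1)$, matched by the explicit $D''$. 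This parity argument works uniformly for all integral $D\sim kL$ without any uniqueness hypothesis, and yields the sharp closed form $\alpha_k(L)=k/(8k-1)$ for odd $k$, which your approach would at best replace by an unquantified strict inequality. If you want to follow your valuation-theoretic route you should at minimum replace ``unique minimizer of index $2$'' by an integrality obstruction of this kind, and you should expect the equivariant reduction to be what makes the verification finite and explicit.
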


Conjectures \ref{main conjecture} and \ref{main conjecture weak} were original stated for Fano $X$ and $L=-K_X$. That remains the most interesting case, due to the relation to K\"ahler--Eintein metrics. Some positive results are known for this case. The case of smooth del Pezzo surfaces can be deduced from \cite{Che}. In general, it was shown by Birkar that $\alpha_k(-K_X)=\alpha(-K_X)$ if $k$ is sufficiently large and \textit{divisible}, provided that $\alpha(-K_X)\leq1$ (even for $-K_X$ nef and big) \cite{Bir}. Note that this result does not provide an answer to Conjecture \ref{main conjecture}. When the manifold is toric, Conjecture \ref{main conjecture} was very recently confirmed by the author and Rubinstein for any ample line bundle who also disproved stabilization for other quantized invariants of the form $\alpha_{k,m}$ in the toric setting \cite[Theorems 1.4, 1.6]{JR}.

There is an even stronger version of Conjecture \ref{main conjecture} \cite[Conjecture 5.4]{Tian12}, predicting how large $k_0$ is.

\begin{conjecture}\label{main conjecture strong}
    For any ample line bundle $L$ over a projective manifold $X$, if the section ring $R(X,L):=\oplus_{k=0}^\infty H^0(X,kL)$ is generated by $\oplus_{k=0}^{k_0}H^0(X,kL)$, then for $k\geq k_0$, $\alpha_k(L)=\alpha(L)$. In particular, if the section ring $R(X,L)$ is generated by $H^0(X,L)$, then for any $k\in\NN_+$, $\alpha_k(L)=\alpha(L)$.
\end{conjecture}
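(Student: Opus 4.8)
The plan is to translate both invariants into valuative language, where they have clean descriptions, and to reduce the conjecture to a single statement about one extremal valuation. Computing log canonical thresholds as infima of log-discrepancy ratios over divisorial valuations $v$ on $X$, one has $\alpha_k(L)=\inf_v k\,A_X(v)/S_k(v)$ and $\alpha(L)=\inf_k\alpha_k(L)=\inf_v A_X(v)/T(v)$, where $A_X(v)$ is the log discrepancy, $S_k(v)=\max\{\ord_v(s):0\neq s\in H^0(X,kL)\}$, and $T(v)=\sup_m S_m(v)/m=\lim_m S_m(v)/m$ — the sequence $(S_m(v))_m$ being superadditive — which is the vanishing threshold $\sup\{t\geq 0:\ \mu^{*}L-tE\text{ pseudo-effective}\}$ for a model $\mu$ with $v=\ord_E$. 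Since $S_k(v)/k\leq T(v)$, one always has $\alpha_k(L)\geq\alpha(L)$ (as stated in the excerpt), so the content of Conjecture \ref{main conjecture strong} is the reverse inequality $\alpha_k(L)\leq\alpha(L)$. As $L$ is ample, $R(X,L)$ is finitely generated; fix $k_0$ as in the statement.

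I would then: (i) pass to a divisorial valuation $v_0$ computing $\alpha(L)$, so $A_X(v_0)/T(v_0)=\alpha(L)$ (if the infimum is only approached, one works with a minimizing sequence; I suppress this point); and (ii) show that the generation hypothesis forces $S_k(v_0)=k\,T(v_0)$ for all $k\geq k_0$, i.e. the normalized vanishing order of $v_0$ has stabilized by degree $k_0$. Granting (ii), for such $k$ one gets $k\,A_X(v_0)/S_k(v_0)=A_X(v_0)/T(v_0)=\alpha(L)$, hence $\alpha_k(L)\leq\alpha(L)$, completing the proof. For the ``in particular'' clause $k_0=1$ this becomes $S_k(v_0)=k\,S_1(v_0)$ with $S_1(v_0)=\max_{\ell\in H^0(X,L)}\ord_{v_0}(\ell)$: the inequality $\geq$ is trivial (use the $k$-th power of an optimal section of $L$), and the inequality $\leq$ is what one would have to extract from the extremality of $v_0$ together with projective normality. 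This is also the content of the convexity estimate $\lct(X,\tfrac1k\sum_{i=1}^kD_i)\geq\alpha_1(L)$ for $D_i\in|L|$ (log canonicity being preserved under $\QQ$-convex combinations of the boundary), which disposes of those members of $|kL|$ whose defining section is a product of sections of $L$.

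Step (ii) is the crux, and it is exactly where the conjecture breaks down. Even granting that $T(v_0)$ is \emph{attained}, say $\ord_{v_0}(s_0)=d_0T(v_0)$ for some $s_0\in H^0(X,d_0L)$, multiplying powers of $s_0$ only yields $S_k(v_0)=k\,T(v_0)$ when $d_0\mid k$ — recovering a statement of Birkar's type along divisible levels, but not for every $k\geq k_0$ — and, more seriously, there is no general mechanism tying the degree in which $R(X,L)$ is generated to the degree in which $v_0$'s vanishing threshold becomes linear: a section $s\in H^0(X,kL)$ written as a sum $\sum_\mu\prod_i\ell_{\mu,i}$ of products of lower-degree sections only satisfies $\ord_{v_0}(s)\geq\min_\mu\sum_i\ord_{v_0}(\ell_{\mu,i})$, and cancellation among the leading terms of the summands along $v_0$ can push $\ord_{v_0}(s)$ strictly above any threshold read off in degrees $\leq k_0$, keeping the ratios $S_k(v_0)/k$ genuinely increasing. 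Producing $(X,L)$ in which this occurs — so that $\alpha_k(L)$ is neither eventually constant nor eventually monotone — is precisely the counterexample of this paper; thus Conjecture \ref{main conjecture strong} (and hence Conjectures \ref{main conjecture} and \ref{main conjecture weak}) fails in general, and an argument of the shape above can at best succeed under additional positivity hypotheses — most naturally $k_0=1$, i.e. projective normality of the embedding by $|L|$, where the obstruction at least takes the milder form $S_k(v_0)\leq k\,S_1(v_0)$.
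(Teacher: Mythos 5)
The statement here is a conjecture, not a theorem, and the paper offers no proof of it; on the contrary, the paper states that Conjecture \ref{main conjecture strong} was already disproved by Ahmadinezhad--Cheltsov--Schicho \cite{ACS}, and the paper's main result (Theorems \ref{main theorem} and \ref{surface main}) furnishes a different counterexample that kills the weaker Conjectures \ref{main conjecture} and \ref{main conjecture weak} as well. You correctly diagnose falsity, which is the only defensible outcome, and your valuative reformulation --- $\alpha_k(L)=\inf_v k\,A_X(v)/S_k(v)$ and $\alpha(L)=\inf_v A_X(v)/T(v)$ with $T(v)=\lim_m S_m(v)/m$ --- plus the observation that the conjecture would force $S_k(v_0)/k=T(v_0)$ for all $k\geq k_0$ once the section ring is generated in degrees $\leq k_0$, is a clean and accurate account of where any attempted proof must break: multiplicativity of sections only gives $S_k\geq k\,S_1$, and nothing ties the generation degree of $R(X,L)$ to the degree past which the normalized vanishing orders of a minimizing valuation stabilize.

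Two points deserve correction. First, the parenthetical ``(and hence Conjectures \ref{main conjecture} and \ref{main conjecture weak})'' runs the implication backwards: Conjecture \ref{main conjecture strong} implies Conjecture \ref{main conjecture}, so its failure does not by itself refute the weaker statements. What the paper actually does is refute Conjectures \ref{main conjecture} and \ref{main conjecture weak} directly (non-stabilizing, non-monotone $\alpha_k$ on an explicit polarized surface), whence Conjecture \ref{main conjecture strong} fails \emph{a fortiori} since $R(S,L)$ is generated in some bounded degree by ampleness; the ACS example, by contrast, refutes only the strong conjecture (with $k_0=1$, via $\alpha_1>\alpha$) and leaves Conjectures \ref{main conjecture} and \ref{main conjecture weak} untouched. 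Second, the paper never invokes the valuation-theoretic apparatus you set up: its counterexample proceeds by an entirely elementary computation of log canonical thresholds against an explicit list of torus-invariant normal-crossing divisors on a six-fold blow-up of $\FF_2$, reducing the whole question to a small linear program (\S\ref{T-var}--\S\ref{computation}). Your framework is a genuinely different and more conceptual way to see where the conjecture goes wrong, but it is not the paper's route, and it is not needed to establish falsity.
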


A counterexample to Conjecture \ref{main conjecture strong} was provided by Ahmadinezhad--Cheltsov--Schicho \cite{ACS}. They found a smooth surface $S$ with a very ample line bundle $L$ such that the section ring $R(S,L)$ is generated by $H^0(S,L)$, but $\alpha_1(L)>\alpha(L)$.

The main result of this article is a counterexample to Conjectures \ref{main conjecture} and \ref{main conjecture weak}.

\begin{theorem}\label{main theorem}
    There is an ample line bundle $L$ over a smooth surface $S$ such that $\alpha_k$ does not stabilize, nor is it eventually monotone.
\end{theorem}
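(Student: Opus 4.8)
The plan is to exhibit one explicit pair $(S,L)$ and to compute the sequence $\alpha_k(L)$ precisely enough to read off both failures at once. First I would take $S$ to be a blow-up of a minimal rational surface (a Hirzebruch surface, say) at finitely many points, chosen so that its N\'eron--Severi lattice, nef cone and effective cone are completely explicit and so that $S$ carries a short list of distinguished prime divisors --- the strict transforms and exceptional curves that will be written $\wZp$, $\wZm$, $\wE$, $\wF$ below --- meeting at a distinguished point $p$ in a prescribed intersection/tangency pattern. I would then pick an ample class $L$ for which there is an effective $\QQ$-divisor $D_0\sim_\QQ L$ supported on these curves whose singularity at $p$ is as bad as the class $L$ permits. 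Setting $c:=\lct(S,D_0)$ --- a finite quantity computed by an explicit tower of blow-ups over $p$ --- the first main task is to prove $\alpha(L)=c$. Beyond producing $D_0$, this needs the uniform lower bound $\lct(S,D)\ge c$ for \emph{every} effective $\QQ$-divisor $D\sim_\QQ L$, obtained by bounding $\operatorname{mult}_pD$ and the multiplicities of the successive strict transforms of $D$ using the intersection numbers $D\cdot A=L\cdot A$ against the distinguished curves $A$ (which pin down the irreducible components of $D$ once it is sufficiently singular at $p$), together with the classification of log canonical pairs on smooth surfaces.

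The central task is then to compute $\alpha_k(L)=k\cdot\inf_{D\in|kL|}\lct(S,D)$ as a function of $k$. Fix $N\in\NN$, $N\ge 2$, so that $ND_0$ clears denominators to an integral divisor $D_0'\in|NL|$. For the \emph{upper} bound on $\alpha_k$ one writes down a member of $|kL|$: when $N\mid k$ take $\tfrac kN D_0'$, which has the same log canonical threshold as $D_0$ after rescaling, giving $\alpha_k(L)\le c$, hence $\alpha_k(L)=c$; when $N\nmid k$ take $\lfloor k/N\rfloor D_0'$ plus a general member of the residual class in $|(k-N\lfloor k/N\rfloor)L|$ and compute its threshold explicitly. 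For the matching \emph{lower} bound one bounds the singularity at $p$ of an arbitrary $D\in|kL|$ exactly as in the computation of $\alpha(L)$ --- including an analysis of the base locus of $|kL|$ --- the new ingredient being that the coefficient with which a distinguished curve $C_i$ can occur in $D$ is controlled by effectivity of the residual class $kL-\sum_i m_iC_i$, a condition depending on $k$ modulo $N$. The outcome should be a formula $\alpha_k(L)=f(k)$ for an explicit arithmetic function $f$ with $\inf_k f(k)=c=\alpha(L)$, with $f(k)=c$ exactly when $N\mid k$ (at least for $k\gg 0$) and $f(k)>c$ otherwise.

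Granting this, Theorem~\ref{main theorem} follows formally. Since $f$ is not eventually constant, Conjecture~\ref{main conjecture} fails for $(S,L)$. For Conjecture~\ref{main conjecture weak}: given any $k_0$, pick a multiple $k_1\ge k_0$ of $N$ and set $k_2=k_1+1$; as $N\ge 2$ we get $\alpha_{k_1}(L)=c<f(k_2)=\alpha_{k_2}(L)$, so the sequence is not eventually non-increasing. In particular the full shape of $f$ is not needed: it suffices to establish that $\alpha_k(L)=\alpha(L)$ whenever $N\mid k$ and $\alpha_k(L)>\alpha(L)$ for all large $k$ with $N\nmid k$.

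The hard part will be the two uniform lower bounds on log canonical thresholds over linear systems: $\lct(S,D)\ge c$ for all effective $D\sim_\QQ L$, and $\lct(S,D)\ge c'$ for some fixed $c'>c$ and all $D\in|kL|$ with $N\nmid k$. Each forces one to run the local analysis of surface singularities --- the multiplicities of iterated strict transforms along the tower over $p$, i.e.\ the explicit classification of log canonical surface pairs --- through every decomposition of $D$ into distinguished curves plus a residual class, and to use intersection theory on $S$ to eliminate the decompositions that would be too singular; and it requires arranging the numerical data (the surface, the blown-up points, $L$, and $N$) so that the analysis genuinely produces a divisibility-in-$k$ dependence rather than collapsing to stabilization. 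A necessary secondary check is that $L$ is ample and $S$ smooth, and that the distinguished curves meet at $p$ with exactly the pattern making $\lct(S,D_0)=c$.
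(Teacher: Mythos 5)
Your high-level plan matches the paper's in its broad outlines --- blow up a Hirzebruch surface at a handful of points, choose an ample $L$ whose worst divisors are supported on the distinguished curves, and engineer a divisibility-in-$k$ obstruction so that $\alpha_k(L)=\alpha(L)$ exactly when $N\mid k$, which formally yields both failures. Your final deduction of Theorem~\ref{main theorem} from such a formula (pick $k_1$ a large multiple of $N$ and $k_2=k_1+1$) is the same as the paper's and is correct.

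However, there is a genuine gap at the center of the argument, and it is precisely where you flag ``the hard part.'' You propose to prove the uniform lower bounds $\lct(S,D)\ge c$ (resp.\ $\ge c'$) for all $D$ in the relevant linear systems by bounding $\operatorname{mult}_pD$ at a distinguished point $p$, running the multiplicities through a tower of blow-ups over $p$, and invoking the classification of log canonical surface pairs. This is a plausible-sounding but substantially harder route than the one the paper actually takes, and you do not carry it out. The paper's key structural idea, which is absent from your proposal, is to endow $S$ with a complexity-one T-variety structure (the $\CC^*$-action on the fibers of $\FF_2\to\PP^1$, lifted through the blow-ups) and to invoke the fact (Theorem~\ref{T-variety}) that $\alpha_k$ is computed on torus-invariant divisors. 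The torus-invariant curves form a \emph{simple normal crossing} configuration, so the log canonical threshold of any torus-invariant $D$ is just the reciprocal of its largest coefficient (Proposition~\ref{nc lct}): no local analysis at a point, no classification of lc surface pairs, and no tower of blow-ups over $p$ are needed. The computation of $\alpha_k$ then reduces to a finite linear program --- maximize the coefficients $a_1,\dots,a_{12}$ subject to the linear constraints $D\cdot C=kL\cdot C$ --- and the parity obstruction drops out of an integrality constraint ($2a_4=k$ or $2a_3=k$ would be forced if $a_{11}$ or $a_{12}$ hit $8k$). Your picture of a single distinguished singular point $p$ resolved by a blow-up tower does not match the example's geometry: the extremal divisor $D'$ is already normal crossing, with its threshold controlled by a large coefficient along a whole exceptional curve $E_1$, not by a concentrated point singularity. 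Without the T-variety reduction (or an equivalent device), the uniform lower bound over $|kL|$ that you need is a serious open step in your write-up, not a routine one, so the proposal as it stands does not constitute a proof.
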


Note that in this counterexample $-K_S$ is nef and big, although we are not taking $L=-K_S$. It still remains open whether for Fano $X$ and $L=-K_X$ such an example exists. Further counterexamples can be constructed if the positivity is relaxed and we explore these in \cite{JR4}.

This article can be considered as a sequel to the program initiated with Rubinstein \cite{JR,JR2,JRT} to tackle problems on stabilization of quantized algebraic invariants arising in K-stability, and in the absence of such stabilization to consider the refined problem of determining the large $k$ asymptotics of such invariants. In this direction, Theorem \ref{surface main} below also shows that the asymptotics $\alpha_k(L)=\alpha(L)+O(k^{-1})$ \cite[Corollary 5.2]{BJ} \cite[Theorem 1.5]{JRT} is optimal in the setting that either $L$ is ample or $\alpha(L)=\alpha_\ell(L)$ for some $\ell$. As far as we are aware, this is also the first time the sequence $\{\alpha_k\}$ is explicitly computed in any example where the sequence is non-constant.

\bigskip
\noindent\textbf{Organization.}
In \S\ref{setup} we explain the construction of the pair $(S,L)$. In \S\ref{Pic} we compute the intersection numbers necessary for the computation of $\alpha_k$-invariants. As an application we show ampleness of $L$ (Theorem \ref{ample}). In \S\ref{T-var} we make use of the T-variety structure and reduce the computation to a linear programming problem. In \S\ref{computation} we finish the computation of $\alpha_k$-invariants and prove Theorem \ref{main theorem}.

\section{Preliminaries}

\subsection{Log canonical thresholds and \texorpdfstring{$\alpha$}{alpha}-invariants}

\begin{definition}\label{lct def}
    Let $D$ be an effective divisor on a complex manifold $X$ given by local defining functions $\{U_i,f_i\}_{i=1}^N$. Its \textit{log canonical threshold} is
    $$
        \lct\left(D\right):=\sup\left\{c\geq0\,\middle|\,\left|f_i\right|^{-2c}\in L^1_\mathrm{loc}\left(U_i\right)\right\}.
    $$
\end{definition}
\begin{proposition}\label{lct homo}
    For any effective divisor $D$ and $k\in\NN_+$, $\lct(kD)=\frac{1}{k}\lct(D)$.
\end{proposition}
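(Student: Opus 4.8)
The plan is to reduce the statement to an elementary change of variables in the integrability condition defining the log canonical threshold. First I would fix a choice of local defining functions $\{U_i,f_i\}_{i=1}^N$ for $D$; since $D$ is effective, $kD$ is effective and is cut out locally by $\{U_i,f_i^k\}_{i=1}^N$. For a holomorphic function $g$ on an open set $U$, write $I_U(g):=\{c\geq0\mid |g|^{-2c}\in L^1_{\mathrm{loc}}(U)\}$. The key structural observation is that $I_U(g)$ is an interval containing $0$: if $c''\leq c'$ and $c'\in I_U(g)$, then the pointwise inequality $|g|^{-2c''}\leq 1+|g|^{-2c'}$ (valid since $|g|^{-2c''}\leq1$ where $|g|\geq1$ and $|g|^{-2c''}\leq|g|^{-2c'}$ where $|g|<1$) shows $c''\in I_U(g)$. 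Consequently $\bigcap_{i=1}^N I_{U_i}(f_i)$ is an interval of the form $[0,\lct(D))$ or $[0,\lct(D)]$, and likewise for $kD$.

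Next I would carry out the change of variables. Since $|f_i^k|^{-2c}=|f_i|^{-2kc}$ holds pointwise on $U_i$, we have $c\in I_{U_i}(f_i^k)$ if and only if $kc\in I_{U_i}(f_i)$. Intersecting over $i$, the interval defining $\lct(kD)$ is exactly the $\frac1k$-dilation of the interval defining $\lct(D)$; taking suprema yields $\lct(kD)=\frac1k\lct(D)$, as claimed.

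There is essentially no serious obstacle here: the only point requiring a little care is that $\lct$ is defined as a supremum, so one cannot a priori assume whether $|f_i|^{-2\lct(D)}$ is locally integrable or not. Passing through the downward-closed interval $I_U(g)$ circumvents this and makes the scaling argument clean regardless of endpoint behaviour. One should also note in passing that $\lct(D)$ is independent of the chosen local data $\{U_i,f_i\}$ — any two choices differ locally by nowhere-vanishing holomorphic factors, which do not affect local integrability of $|f_i|^{-2c}$ — so the identity is well posed.
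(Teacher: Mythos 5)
Your argument is correct and follows the same route as the paper's one-line proof: $f_i^k$ cuts out $kD$ locally, and the pointwise identity $|f_i^k|^{-2c}=|f_i|^{-2kc}$ shows the integrability set for $kD$ is the $\tfrac1k$-dilation of that for $D$. The downward-closedness observation is sound but superfluous here, since $\sup\bigl(\tfrac1k A\bigr)=\tfrac1k\sup A$ for any subset $A\subseteq[0,\infty)$ regardless of whether it is an interval.
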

\begin{proof}
    If $f$ is a local defining function of $D$, then $f^k$ is a local defining function of $kD$. The result follows from Definition \ref{lct def}.
\end{proof}

Using this homogeneity we can extend the notion of log canonical threshold to effective $\QQ$-divisors.

\begin{definition}
    Let $D$ be an effective $\QQ$-divisor and suppose $kD$ is a divisor for some $k\in\NN_+$. Then
    $$
        \lct\left(D\right):=k\lct\left(kD\right).
    $$
\end{definition}

\begin{definition}\label{snc def}
    A $\QQ$-divisor $D$ on a complex manifold $X$ is \textit{normal crossing} if for any $p\in X$ there is a coordinate chart on some neighborhood $U$ of $p$ such that $\Supp D\cap U$ is a union of coordinate hyperplanes, i.e., it is the vanishing locus of $z_1\cdots z_\ell$, for some $0\leq\ell\leq n$.
\end{definition}
\begin{lemma}\label{snc lem}
    Consider the function $f(z_1,\ldots,z_n)=\prod_{i=1}^nz_i^{a_i}$, where $a_i\in\NN$. Then for $c\geq0$, $|f|^{-2c}$ is integrable around $0$ if and only if $c<(\max_{1\leq i\leq n}a_i)^{-1}$.
\end{lemma}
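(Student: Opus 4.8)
The plan is to separate variables and reduce everything to a one-dimensional computation. Since a neighborhood of the origin contains, and is contained in, a polydisc, it suffices to decide for which $c$ the integral of $|f|^{-2c} = \prod_{i=1}^{n}|z_i|^{-2ca_i}$ over a polydisc $\{|z_i|<\epsilon,\ i=1,\ldots,n\}$ is finite. The integrand is a product of functions of the separate variables and is nonnegative, so by Tonelli's theorem this integral equals the product $\prod_{i=1}^{n}\int_{|w|<\epsilon}|w|^{-2ca_i}\,dV(w)$, with the convention that the product is $+\infty$ as soon as one factor is. Thus $|f|^{-2c}$ is integrable around $0$ if and only if each factor $\int_{|w|<\epsilon}|w|^{-2ca_i}\,dV(w)$ is finite.

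Next I would evaluate the one-variable integral in polar coordinates: $\int_{|w|<\epsilon}|w|^{-2\beta}\,dV(w) = 2\pi\int_{0}^{\epsilon}r^{1-2\beta}\,dr$, which is finite exactly when $1-2\beta>-1$, i.e. $\beta<1$. Taking $\beta=ca_i$: if $a_i>0$ the $i$-th factor is finite iff $c<1/a_i$, and if $a_i=0$ the factor is just the area of a disc and is always finite. Combining the factors, $|f|^{-2c}$ is integrable around $0$ iff $c<1/a_i$ for every $i$ with $a_i>0$, equivalently iff $c<(\max_{1\le i\le n}a_i)^{-1}$. (If every $a_i=0$ then $f\equiv1$, the right-hand side is read as $+\infty$, and the statement is trivial; so one may assume some $a_i\ge1$.)

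There is no real obstacle here. The only point deserving care is the passage from the multivariable integral to the product of one-variable integrals: this is precisely where one uses nonnegativity of the integrand, so that Tonelli's theorem applies unconditionally and neither direction of the equivalence is lost. Everything else is the elementary convergence criterion for $\int_0^\epsilon r^{1-2\beta}\,dr$.
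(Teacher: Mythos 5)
Your proof is correct and takes essentially the same route as the paper: separate variables over a polydisc, reduce to the one-variable integral $\int_0^\epsilon r^{1-2\beta}\,dr$, and combine. You are somewhat more careful than the paper in invoking Tonelli explicitly and handling the $a_i=0$ case, but there is no substantive difference.
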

\begin{proof}
    Let $D$ denote the unit disc. We compute
    $$
        \int_{D^n}\left|f\left(z\right)\right|^{-2c}dz=\prod_{i=1}^n\int_D\left|z_i\right|^{-2a_ic}dz_i.
    $$
    Therefore $|f|^{-2c}$ is integrable around $0$ if and only if for each $i$, $c<a_i^{-1}$.
\end{proof}

\begin{proposition}\label{nc lct}
    Suppose $D=\sum_{i=1}^\ell a_iD_i$ is a normal crossing $\QQ$-divisor, where $a_i>0$. Then $\lct(D)=(\max_{1\leq i\leq\ell}a_i)^{-1}$.
\end{proposition}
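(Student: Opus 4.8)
The plan is to reduce everything to the local computation already performed in Lemma \ref{snc lem}. First, using homogeneity, pick $k\in\NN_+$ with $ka_i\in\NN$ for all $i$. Then $kD=\sum_i(ka_i)D_i$ is an honest normal crossing divisor (it has the same support as $D$), and by Proposition \ref{lct homo}, $\lct(D)=k\,\lct(kD)$, while $(\max_i ka_i)^{-1}=k^{-1}(\max_i a_i)^{-1}$; so it suffices to treat the case $a_i\in\NN$.

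Next I would localize. The condition in Definition \ref{lct def} that $|f_i|^{-2c}\in L^1_{\mathrm{loc}}(U_i)$ for every $i$ is equivalent to the statement that for every point $p\in X$, $|f|^{-2c}$ is integrable on some neighborhood of $p$, where $f$ is an arbitrary local defining function of $D$ near $p$ (two such differ by a nonvanishing holomorphic function $u$, and since $|u|$ is bounded above and below by positive constants on a compact neighborhood of $p$, local integrability of $|f|^{-2c}$ does not depend on this choice). Fixing $p\in\Supp D$ and applying the normal crossing hypothesis, choose coordinates $z_1,\dots,z_n$ on a neighborhood $U$ of $p$ with $\Supp D\cap U=\{z_1\cdots z_\ell=0\}$; shrinking $U$, each prime $D_i$ meeting $U$ is exactly one coordinate hyperplane $\{z_{j}=0\}$, so a local defining function of $D$ has the form $f=u\prod_{j=1}^\ell z_j^{m_j}$ with $u$ nonvanishing and $m_j\in\{0\}\cup\{a_1,\dots,a_\ell\}$, and $\max_j m_j=\max\{a_i: p\in D_i\}$. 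By the unit remark and Lemma \ref{snc lem}, $|f|^{-2c}$ is integrable near $p$ if and only if $c<(\max\{a_i:p\in D_i\})^{-1}$.

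Combining these, $|f_i|^{-2c}\in L^1_{\mathrm{loc}}(U_i)$ for all $i$ if and only if $c<(\max\{a_i:p\in D_i\})^{-1}$ for every $p\in\Supp D$, i.e. if and only if $c<\bigl(\sup_{p\in\Supp D}\max\{a_i:p\in D_i\}\bigr)^{-1}$. The supremum inside is attained and equals $\max_{1\leq i\leq\ell}a_i$: picking an index $i_0$ with $a_{i_0}=\max_i a_i$ and any point $p\in D_{i_0}$ (which is nonempty) already realizes it, and no $p$ can do better. Taking the supremum over admissible $c$ therefore gives $\lct(D)=(\max_{1\leq i\leq\ell}a_i)^{-1}$, as claimed.

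The only genuinely delicate point is the second step: justifying that the global $L^1_{\mathrm{loc}}$ condition in Definition \ref{lct def} is equivalent to pointwise local integrability, together with the observation that multiplying a defining function by a unit does not affect local integrability. Once this bridge between the global definition and the purely local Lemma \ref{snc lem} is in place, the remainder is elementary bookkeeping.
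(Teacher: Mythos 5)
Your proof is correct and follows the same route as the paper: reduce by homogeneity to integer coefficients and then apply Lemma \ref{snc lem} via the normal crossing local model. The paper's own proof is just a two-line pointer to Definition \ref{snc def} and Lemma \ref{snc lem}; you have simply spelled out the localization and unit-independence details that the paper leaves implicit.
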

\begin{proof}
    By homogeneity (Proposition \ref{lct homo}) we may assume $D$ is a divisor, i.e., $a_i\in\NN_+$. Recall Definition \ref{snc def}. The result follows from Lemma \ref{snc lem}.
\end{proof}

\begin{definition}\label{def}
    Let $L$ be an ample line bundle. For $k\in\NN_+$,
    \begin{equation}\label{alpha_k def}
        \alpha_k\left(L\right):=\inf_{D\sim kL}k\lct\left(D\right),
    \end{equation}
    and
    \begin{equation}\label{alpha def}
        \alpha\left(L\right):=\inf_{k\in\NN_+}\alpha_k.
    \end{equation}
\end{definition}
In other words, $\alpha(L)=\inf_{D\sim_\QQ L}\lct(D)$.

\subsection{T-varieties}

\begin{definition}
    A \textit{T-variety of complexity $k$} is a normal variety $X$ admitting an effective torus action of codimension $k$.
\end{definition}

To compute $\alpha$-invariants on T-varieties it suffices to look at torus-invariant divisors \cite[Proposition 2.6]{Sus}.
\begin{theorem}\label{T-variety}
    Let $X$ be a T-variety. For any effective divisor $D$, there is a torus-invariant divisor $D'\sim D$ with $\lct(D')\leq\lct(D)$.
\end{theorem}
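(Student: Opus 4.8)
The plan is to degenerate $D$ to a torus-invariant divisor by letting a generic one-parameter subgroup of the acting torus $T\cong(\CC^*)^r$ act and passing to the limit, and then to conclude with the semicontinuity of the log canonical threshold in a flat family. I will treat the case that $X$ is projective, which is the one needed in our application (and makes $\Pic^0(X)$ an abelian variety); the semicontinuity input below is in any case a local statement.

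Fix a one-parameter subgroup $\lambda\colon\CC^*\to T$ and let $\mathcal{D}\subset X\times\mathbb{A}^1$ be the Zariski closure of the divisor $\{(y,t)\in X\times\CC^*:\lambda(t)^{-1}\cdot y\in D\}$; it is effective, flat over $\mathbb{A}^1$, has fiber $\mathcal{D}_t=\lambda(t)\cdot D$ for $t\neq0$, and I set $D':=\mathcal{D}_0$. To see that $D'$ is $T$-invariant for generic $\lambda$, use Sumihiro's theorem to cover $X$ by finitely many $T$-invariant affine opens $U_i$, on whose coordinate rings we have the weight decomposition $A_i=\bigoplus_\chi(A_i)_\chi$; let $f_i=\sum_{\chi\in S_i}(f_i)_\chi\in A_i$ be a local equation for $D$, with $S_i$ the finite set of occurring weights. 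Over $U_i$ the divisor $\mathcal{D}$ is cut out by $t^{-m_i}\sum_{\chi\in S_i}t^{\langle\chi,\lambda\rangle}(f_i)_\chi$, where $m_i=\min_{\chi\in S_i}\langle\chi,\lambda\rangle$, so $D'|_{U_i}$ is the zero locus of $\sum_{\chi:\langle\chi,\lambda\rangle=m_i}(f_i)_\chi$; if $\lambda$ avoids the finitely many walls $\langle\chi-\chi',\lambda\rangle=0$ (over $\chi\neq\chi'$ in $\bigcup_iS_i$) this sum reduces to a single weight vector $(f_i)_{\chi_i}$, whose zero locus is $T$-invariant. Thus $D'$ is $T$-invariant. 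Moreover $D'\sim D$: the morphism $\mathbb{A}^1\to\Pic(X)$, $t\mapsto[\cO_X(\mathcal{D}_t)]$, has image in a single translate of the abelian variety $\Pic^0(X)$ and is therefore constant, with value $[\cO_X(D)]$ at $t=1$.

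Finally, for $t\neq0$ the automorphism $\lambda(t)$ of $X$ carries $(X,D)$ to $(X,\mathcal{D}_t)$, so $\lct(\mathcal{D}_t)=\lct(D)$; by the lower semicontinuity of the log canonical threshold in a flat family (the semicontinuity theorem of Demailly and Koll\'ar) $\lct(D')=\lct(\mathcal{D}_0)\le\lct(\mathcal{D}_t)=\lct(D)$, and $D'$ is the desired torus-invariant divisor. I expect the point needing the most care to be that one \emph{fixed} generic $\lambda$ produces a limit invariant under the whole torus $T$, not merely under $\lambda(\CC^*)$ — this is exactly what the finiteness of the weight supports $S_i$ and the genericity of $\lambda$ give — while the semicontinuity statement is a standard though nontrivial black box and the gluing of the local descriptions of $D'$ on overlaps is automatic since $D'=\mathcal{D}_0$ is globally defined.
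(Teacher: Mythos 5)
The paper does not actually prove this result; it simply cites S\"uss \cite[Proposition 2.6]{Sus}, so you had to produce an argument from scratch. What you wrote is correct, and it is the standard degeneration-plus-semicontinuity argument, which as far as I can tell is also the underlying idea in the cited reference. The key steps all check out: choosing a one-parameter subgroup $\lambda$ off the finite wall arrangement $\langle\chi-\chi',\lambda\rangle=0$ (over all pairs of weights occurring in the supports $S_i$) forces the central fiber $\mathcal{D}_0$ of the isotrivial family $\mathcal{D}_t=\lambda(t)\cdot D$ to be cut out on each $T$-invariant affine chart by a single $T$-eigenfunction, hence to be $T$-invariant; $\mathcal{D}_0\sim D$ because the morphism $\mathbb{A}^1\to\Pic(X)$, $t\mapsto[\cO_X(\mathcal{D}_t)]$, lands in a translate of the abelian variety $\Pic^0(X)$ and must be constant (you are right to restrict to $X$ projective here, which is all that is needed); and Demailly--Koll\'ar lower semicontinuity gives $\lct(\mathcal{D}_0)\le\liminf_{t\to0}\lct(\mathcal{D}_t)=\lct(D)$. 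Two small points worth recording. First, your argument implicitly assumes $D$ is Cartier so that local equations $f_i\in A_i$ exist; this is harmless here since the surface $S$ is smooth, but should be flagged when $X$ is only assumed normal. Second, the fact that the closure $\mathcal{D}$ is Cartier on $X\times\mathbb{A}^1$ --- so that $\mathcal{D}_0$ is a genuine Cartier divisor and your local descriptions glue --- follows immediately from smoothness of $X\times\mathbb{A}^1$ in the relevant case; for general normal $X$ one would instead note, via Rosenlicht's lemma, that the transition units $u_{ij}=f_i/f_j$ are $T$-eigenvectors of some weight $\nu_{ij}$, whence $S_i=S_j+\nu_{ij}$ and $m_i-m_j=\langle\nu_{ij},\lambda\rangle$, so that $g_i/g_j$ remains a unit across $t=0$.
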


\section{Construction of the surface and polarization}\label{setup}

In this section we explain the construction of the pair $(S,L)$ and fix some notations. See Figure \ref{figure} for an illustration.

\begin{figure}[ht]
    \centering
    \begin{tikzpicture}
        \draw(-1,2)node[left]{$\Zm$}rectangle(1,-2);
        \draw(1,2)node[right]{$\Zp$};
        \draw(0,-2)node[below]{$\FF_2$};
        \filldraw(1,-1)circle(1pt)node[right]{$p_1$};
        \filldraw(1,0)circle(1pt)node[right]{$p_2$};
        \filldraw(1,.5)circle(1pt)node[right]{$p_3$};
        \filldraw(1,1)circle(1pt)node[right]{$p_4$};
        \draw[->](1.8,0)--(2.5,0)node[midway,above]{$\pi$};
        \draw(3,-2)node[below]{$\PP^1$}--(3,2);
        \filldraw(3,-1)circle(1pt)node[right]{$\pi(p_1)$};
        \filldraw(3,0)circle(1pt)node[right]{$\pi(p_2)$};
        \filldraw(3,.5)circle(1pt)node[right]{$\pi(p_3)$};
        \filldraw(3,1)circle(1pt)node[right]{$\pi(p_4)$};
        \draw[->](-2.2,0)--(-1.5,0);
        \draw(-5.3,2)rectangle(-3.3,-2);
        \draw(-4.3,-2)node[below]{$\ol{S}$};
        \draw(-5.5,-1.1)node[left]{$\ol{F}_1$}--(-4.1,-.4);
        \draw(-3.1,-1.1)node[right]{$\ol{E}_1$}--(-4.5,-.4);
        \draw(-5.5,-.1)node[left]{$\ol{F}_2$}--(-4.1,.6);
        \draw(-3.1,-.1)node[right]{$\ol{E}_2$}--(-4.5,.6);
        \draw(-5.5,.4)node[left]{$\ol{F}_3$}--(-4.1,1.1);
        \draw(-3.1,.4)node[right]{$\ol{E}_3$}--(-4.5,1.1);
        \draw(-5.5,.9)node[left]{$\ol{F}_4$}--(-4.1,1.6);
        \draw(-3.1,.9)node[right]{$\ol{E}_4$}--(-4.5,1.6);
        \draw[->](-7.1,0)--(-6.4,0);
        \draw(-10.2,2)node[left]{$\wZm$}rectangle(-8.2,-2);
        \draw(-8.2,2)node[right]{$\wZp$};
        \draw(-9.2,-2)node[below]{$S$};
        \draw(-10.4,-1.2)node[left]{$\wF_1$}--(-9.5,-.3);
        \draw(-8,-1.2)node[right]{$\wE_1$}--(-8.9,-.3);
        \draw(-9.9,-.5)--(-8.5,-.5)node[midway,below]{$E_1$};
        \draw(-10.4,-.2)node[left]{$\wF_2$}--(-9.5,.7);
        \draw(-8,-.2)node[right]{$\wE_2$}--(-8.9,.7);
        \draw(-9.9,.5)--(-8.5,.5)node[midway,below]{$E_2$};
        \draw(-10.4,.4)node[left]{$\wF_3$}--(-9,1.1);
        \draw(-8,.4)node[right]{$\wE_3$}--(-9.4,1.1);
        \draw(-10.4,.9)node[left]{$\wF_4$}--(-9,1.6);
        \draw(-8,.9)node[right]{$\wE_4$}--(-9.4,1.6);
    \end{tikzpicture}
    \caption{The construction of the surface $S$.}\label{figure}
\end{figure}
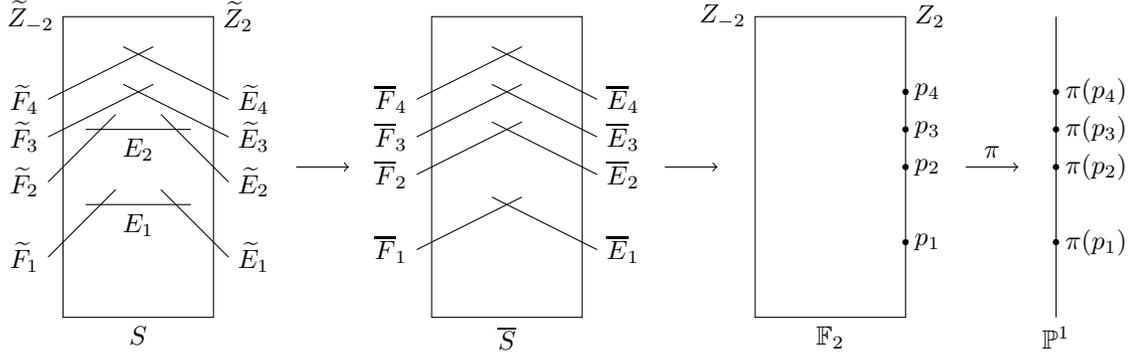

Recall that the Hirzebruch surface $\FF_2$ is a $\PP^1$-bundle over $\PP^1$. Let $\pi:\FF_2\to\PP^1$ denote this projection map. We also fix two sections of this bundle, namely the $-2$-curve $\Zm$, and a smooth $2$-curve $\Zp$.

Pick four points $p_1,\ldots,p_4\in\Zp$. Let $F_i$ denote the fiber $\pi^{-1}(\pi(p_i))$, and $F$ a general fiber on $\FF_2$, i.e., $F=\pi^{-1}(p)$ for some $p\in U$, where
$$
    U:=\PP^1\setminus\left\{\pi\left(p_1\right),\ldots,\pi\left(p_4\right)\right\}.
$$
Let $\ol{S}$ denote the blow-up of $\FF_2$ at the four points $p_1,\ldots,p_4$. Let $\ol{E}_i$ denote the corresponding exceptional divisor, and $\ol{F}_i$ the proper transform of $F_i$.

Finally, let $S$ denote the blow-up of $\ol{S}$ at the two points $\ol{E}_1\cap\ol{F}_1$ and $\ol{E}_2\cap\ol{F}_2$. Let $E_1$ and $E_2$ denote the corresponding exceptional divisor, and $\wE_i$ (resp.\ $\wF_i$) the proper transform of $\ol{E}_i$ (resp.\ $\ol{F}_i$).

Let $f:S\to\FF_2$ denote the composition of blow-up maps, $\wZp$ (resp.\ $\wZm$, $\wF$) the proper transform of $\Zp$ (resp.\ $\Zm$, $F$). We consider the line bundle
\begin{equation}\label{L def}
    L:=\cO_S\left(2\wZp+2\wZm+3\wF+\wE_1+\wF_1+E_1+\wE_2+\wF_2+E_2\right).
\end{equation}

\section{Picard group and intersection numbers}\label{Pic}

We compute the intersection numbers on $\Pic(S)$ in this section. As an application, we show that $L$ is ample (Theorem \ref{ample}).

Recall that $\Pic(\FF_2)$ is a free abelian group of rank $2$, generated by $[\Zp]$ and $[F]$, with $2$-by-$2$ intersection matrix
$$
    \begin{pmatrix}
        2&1\\
        1&0
    \end{pmatrix},
$$
and
$$
    \Zm\sim\Zp-2F.
$$
After the six blow-ups, $\Pic(S)$ is a free abelian group of rank $8$, generated by $[\wZp]$, $[\wF]$, and the exceptional divisors $[\wE_1],\ldots,[\wE_4],[E_1],[E_2]$, with $8$-by-$8$ intersection matrix (off-diagonal zero entries are omitted)
\begin{equation}\label{intersection matrix}
    \begin{pmatrix}
        -2&1& 1&1 & 1& 1\\
         1&0\\
         1& &-2&  &  &  & 1\\
         1& &  &-2&  &  &  &1\\
         1& &  &  &-1\\
         1& &  &  &  &-1\\
          & & 1&  &  &  &-1\\
          & &  & 1&  &  &  &-1\\
    \end{pmatrix},
\end{equation}
and
\begin{equation}\label{linear equivalences}
    \begin{gathered}
        \wF_1\sim\wF-\wE_1-2E_1,\qquad\wF_2\sim\wF-\wE_2-2E_2,\qquad\wF_3\sim\wF-\wE_3,\qquad\wF_4\sim\wF-\wE_4,\\
        \wZm\sim\wZp-2\wF+\wE_1+\wE_2+\wE_3+\wE_4+E_1+E_2.
    \end{gathered}
\end{equation}
See also Figure \ref{negative curves}.

\begin{figure}[ht]
    \centering
    \begin{tikzpicture}
        \draw[dotted](-10.2,2)rectangle(-8.2,-2);
        \draw[ultra thick](-10.4,-1.2)node[left]{$\wF_1$}--(-9.5,-.3);
        \draw[ultra thick](-8,-1.2)node[right]{$\wE_1$}--(-8.9,-.3);
        \draw[ultra thin](-9.9,-.5)--(-8.5,-.5)node[midway,below]{$E_1$};
        \draw[ultra thick](-10.4,-.2)node[left]{$\wF_2$}--(-9.5,.7);
        \draw[ultra thick](-8,-.2)node[right]{$\wE_2$}--(-8.9,.7);
        \draw[ultra thin](-9.9,.5)--(-8.5,.5)node[midway,below]{$E_2$};
        \draw[ultra thin](-10.4,.4)node[left]{$\wF_3$}--(-9,1.1);
        \draw[ultra thin](-8,.4)node[right]{$\wE_3$}--(-9.4,1.1);
        \draw[ultra thin](-10.4,.9)node[left]{$\wF_4$}--(-9,1.6);
        \draw[ultra thin](-8,.9)node[right]{$\wE_4$}--(-9.4,1.6);
        \draw[ultra thick](-10.2,-2)--(-10.2,2)node[left]{$\wZm$};
        \draw[ultra thick](-8.2,-2)--(-8.2,2)node[right]{$\wZp$};
    \end{tikzpicture}
    \caption{Some curves with negative self-intersection on $S$. The thick curves have self-intersection $-2$, and the thin curves have self-intersection $-1$.}\label{negative curves}
\end{figure}
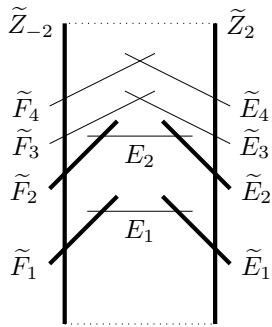

Recall \eqref{L def}. We compute the following intersection numbers.
\begin{equation}\label{L intersection}\begin{gathered}
    L\cdot\wZp=1,\qquad L\cdot\wZm=1,\qquad L\cdot\wF=4,\\
    L\cdot\wE_1=1,\qquad L\cdot\wF_1=1,\qquad L\cdot E_1=1,\\
    L\cdot\wE_2=1,\qquad L\cdot\wF_2=1,\qquad L\cdot E_2=1,\\
    L\cdot\wE_3=2,\qquad L\cdot\wF_3=2,\qquad L\cdot\wE_4=2,\qquad L\cdot\wF_4=2.
\end{gathered}\end{equation}

To prove ampleness we use Nakai--Moishezon criterion (see, e.g., \cite[Ch. V, Theorem 1.10]{Har}).

\begin{theorem}
    A divisor $D$ on the surface $X$ is ample if and only if $D^2>0$ and $D\cdot C>0$ for all irreducible curves $C$ in $X$.
\end{theorem}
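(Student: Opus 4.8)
The plan is to follow the classical surface-theoretic proof of the Nakai--Moishezon criterion. The forward implication is immediate: if $D$ is ample then $mD$ is very ample for some $m\in\NN_+$, giving a closed immersion $\phi\colon X\hookrightarrow\PP^N$ with $\phi^*\cO(1)=\cO_X(mD)$, so that $mD\sim\phi^*H$ for a hyperplane $H$. The projection formula then gives $m^2D^2=(\phi^*H)^2=\deg\phi(X)>0$ and, for any irreducible curve $C$, $m(D\cdot C)=H\cdot\phi_*C=\deg\phi(C)>0$. So only the converse needs work.

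Assume $D^2>0$ and $D\cdot C>0$ for every irreducible curve $C$. First I would produce a nonzero effective divisor in $|nD|$ for $n\gg0$. Riemann--Roch on $X$ gives $\chi(\cO_X(nD))=\chi(\cO_X)+\tfrac12\,nD\cdot(nD-K_X)=\tfrac{n^2}{2}D^2-\tfrac{n}{2}D\cdot K_X+\chi(\cO_X)\to+\infty$ since $D^2>0$. By Serre duality $h^2(\cO_X(nD))=h^0(\cO_X(K_X-nD))$, and a nonzero such section would be an effective divisor $E\sim K_X-nD$ with $D\cdot E=D\cdot K_X-nD^2<0$ for large $n$ --- impossible, since $D\cdot C>0$ on every irreducible curve forces $D\cdot E\geq0$ on every effective divisor. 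Hence $h^2(\cO_X(nD))=0$ and $h^0(\cO_X(nD))\geq\chi(\cO_X(nD))\to+\infty$. Because $D$ is ample iff $nD$ is, I may replace $D$ by a multiple and assume $D\sim D'$ with $D'=\sum_i a_iC_i$ effective, $a_i>0$, each $C_i$ irreducible, and $D\cdot C_i>0$.

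Next I would control cohomology along $D'$ via the exact sequence $0\to\cO_X((n-1)D)\to\cO_X(nD)\to\cO_{D'}(nD)\to0$. Filtering $\cO_{D'}$ by the subschemes obtained by peeling off one component at a time realizes $\cO_{D'}(nD)$ as an iterated extension of line bundles, the quotient coming off the $j$-th copy of $C_i$ being a line bundle on the integral curve $C_i$ of degree $n(D\cdot C_i)+O(1)$; since $D\cdot C_i>0$, for $n\gg0$ this degree exceeds $2p_a(C_i)-2$, so its $H^1$ vanishes, whence $H^1(D',\cO_{D'}(nD))=0$. The long exact sequence then yields surjections $H^1(\cO_X((n-1)D))\twoheadrightarrow H^1(\cO_X(nD))$ for $n\gg0$; being finite-dimensional these stabilize, and once they do the restriction $H^0(\cO_X(nD))\to H^0(\cO_{D'}(nD))$ is surjective. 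The same filtration shows $\cO_{D'}(nD)$ is globally generated on $D'$ for $n\gg0$; combined with the surjectivity, and with the fact that off $\Supp D'$ the tautological section of $\cO_X(nD)=\cO_X(nD')$ does not vanish, we get that $\cO_X(nD)$ is globally generated. The morphism $\phi\colon X\to\PP^N$ it defines, with $\phi^*\cO(1)=\cO_X(nD)$, contracts no curve (if $\phi(C)$ were a point then $nD\cdot C=0$), hence is quasi-finite, hence finite (being proper); and a finite pullback of an ample line bundle is ample, so $\cO_X(nD)$, and with it $D$, is ample.

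The step I expect to need the most care is the vanishing $H^1(D',\cO_{D'}(nD))=0$: here $D'$ is in general neither reduced nor irreducible, and it is precisely at this point that the hypothesis $D\cdot C>0$ for \emph{every} irreducible curve is used in full strength, rather than merely a nef-and-big condition.
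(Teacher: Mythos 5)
The paper does not prove this statement; it simply cites it as the Nakai--Moishezon criterion from Hartshorne \cite[Ch.\ V, Theorem 1.10]{Har}. Your argument is correct and is essentially the standard textbook proof from that reference: Riemann--Roch plus $h^2$-vanishing to produce sections, passing to an effective representative $D'$, the component-by-component filtration of $\cO_{D'}(nD)$ to kill $H^1(D',\cO_{D'}(nD))$, stabilization of $h^1(\cO_X(nD))$ to get surjectivity of restriction and hence global generation, and finiteness of the resulting morphism.
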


\begin{theorem}\label{ample}
    The line bundle $L$ defined by \eqref{L def} is ample.
\end{theorem}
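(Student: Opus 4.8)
The plan is to apply the Nakai--Moishezon criterion just recalled: it suffices to check that $L^2>0$ and that $L\cdot C>0$ for every irreducible curve $C\subset S$. For the first condition I would simply pair the defining expression \eqref{L def} with $L$ and read off the needed numbers from \eqref{L intersection}, which gives
\[
    L^2=2(L\cdot\wZp)+2(L\cdot\wZm)+3(L\cdot\wF)+(L\cdot\wE_1)+(L\cdot\wF_1)+(L\cdot E_1)+(L\cdot\wE_2)+(L\cdot\wF_2)+(L\cdot E_2)=22>0.
\]

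For the curve condition I would exploit the fibration $g:=\pi\circ f:S\to\PP^1$, whose general fibre is $\wF$; note that $3\wF$ is nef and that $L-3\wF=2\wZp+2\wZm+\wE_1+\wF_1+E_1+\wE_2+\wF_2+E_2$ is effective by \eqref{L def}. Let $C$ be irreducible. If $C$ is one of the two sections $\wZp$, $\wZm$ of $g$, then $L\cdot C=1>0$ by \eqref{L intersection}. Otherwise there are two cases. If $C$ is contracted by $g$, then $C$ is an irreducible component of a fibre of $g$, so $C\in\{\wF,\wF_1,\wE_1,E_1,\wF_2,\wE_2,E_2,\wF_3,\wE_3,\wF_4,\wE_4\}$, and for each of these $L\cdot C\geq1>0$ by \eqref{L intersection}. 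If instead $C$ dominates $\PP^1$ under $g$, then $g|_C$ is a finite surjection, so $C\cdot\wF=\deg(g|_C)>0$; moreover $C$ coincides with no prime component of $L-3\wF$ (those are $\wZp,\wZm$, already excluded, and the fibre components $\wE_i,\wF_i,E_i$, which are contracted by $g$), hence $C\cdot(L-3\wF)\geq0$ and therefore $L\cdot C=C\cdot(L-3\wF)+3(C\cdot\wF)\geq3>0$.

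The only step that takes a little care is the identification of the fibres of $g$ used above, which I would obtain by tracking the six blow-ups of \S\ref{setup}: over $U$ the map $g$ is a $\PP^1$-bundle, while over the four distinct points $\pi(p_1),\dots,\pi(p_4)$ the fibre $F_i$ is replaced by the chain $\wF_i,\wE_i$ for $i=3,4$ and, after the additional blow-up at $\ol E_i\cap\ol F_i$, by the chain $\wF_i,E_i,\wE_i$ (with $E_i$ appearing with multiplicity $2$) for $i=1,2$. Granting this, the case analysis is exhaustive, and the Nakai--Moishezon criterion yields ampleness of $L$.
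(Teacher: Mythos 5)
Your proof is correct. It uses the Nakai--Moishezon criterion, as the paper does, but the case analysis is organized differently. The paper observes that since $L$ itself is effective, once $L\cdot C>0$ holds for every irreducible curve the inequality $L^2>0$ follows for free (writing $L\sim D=\sum a_iC_i$ with $a_i>0$ gives $L^2=\sum a_i\,L\cdot C_i>0$), so it never computes $L^2$; for the curve check it splits into $C\subset\Supp D$ versus $C\not\subset\Supp D$, and in the latter case just argues that $C$ must meet $\Supp D$ via the image $(\pi\circ f)(C)$. You instead compute $L^2=22$ directly, and organize the curve check around the fibration $g=\pi\circ f$ together with the nef-plus-effective splitting $L=3\wF+(L-3\wF)$, handling sections, fiber components, and dominating multisections separately. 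Both arguments are sound; yours has the small advantage of being slightly more explicit about the components of the degenerate fibers over $\pi(p_1),\dots,\pi(p_4)$ (including $\wE_3,\wF_3,\wE_4,\wF_4$, which are not in $\Supp D$ and so are handled only implicitly in the paper), while the paper's shortcut via effectivity of $L$ avoids the $L^2$ computation. One cosmetic remark: in your list of contracted curves, $\wF$ should be understood as the linear equivalence class of a general fiber $g^{-1}(p)$, $p\in U$, rather than a single fixed curve; since $L\cdot g^{-1}(p)=L\cdot\wF=4$ for every such $p$, this does not affect the argument.
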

\begin{proof}
    Since $L$ is effective, it suffices to prove $L\cdot C>0$ for all irreducible curves $C$.

    Let
    $$
        D:=2\wZp+2\wZm+3\wF+\wE_1+\wF_1+E_1+\wE_2+\wF_2+E_2\sim L.
    $$
    If $C$ lies in $\Supp D$, it has been shown in \eqref{L intersection} that $L\cdot C>0$. If $C$ does not lie in $\Supp D$, it suffices to show that $C$ intersects $\Supp D$. Recall the blow-down map $f:S\to\FF_2$ and the projection $\pi:\FF_2\to\PP^1$. The image $(\pi\circ f)(C)$ is either $0$-dimensional (i.e., a point $p\in U\subseteq\PP^1$) or $1$-dimensional (i.e., the whole $\PP^1$). In the former case, $C=(\pi\circ f)^{-1}(p)$, so $C$ intersects $\wZp$ and $\wZm$. In the latter case, $C$ intersects $\wF$. This completes the proof.
\end{proof}

\section{Torus-invariant divisors}\label{T-var}

In this section we discuss the T-variety structure of $S$, and reduce the family of divisors to be considered in computing $\alpha_k$-invariants.

Recall that $\FF_2$ admits a toric variety structure, with $\Zp$ fixed by the torus action. We regard it as a T-variety of complexity $1$ by restricting the torus action to the one dimensional subgroup that leaves each fiber invariant. More specifically, recall that $\FF_2\setminus\Zm$ is the total space of the line bundle $\cO_{\PP^1}(2)$. The $\CC^*$-action is the scalar multiplication on each fiber of this line bundle.

This action can then be lifted to $S$, equipping $S$ with the structure of T-variety of complexity $1$.

\begin{figure}[ht]
    \centering
    \begin{tikzpicture}
        \draw[gray](-10.2,2)rectangle(-8.2,-2);
        \draw[gray](-10.4,-1.2)--(-9.5,-.3);
        \draw[gray](-8,-1.2)--(-8.9,-.3);
        \draw[gray](-9.9,-.5)--(-8.5,-.5);
        \filldraw(-9.7,-.5)circle(1pt);
        \filldraw(-8.7,-.5)circle(1pt);
        \draw[gray](-10.4,-.2)--(-9.5,.7);
        \draw[gray](-8,-.2)--(-8.9,.7);
        \draw[gray](-9.9,.5)--(-8.5,.5);
        \filldraw(-9.7,.5)circle(1pt);
        \filldraw(-8.7,.5)circle(1pt);
        \draw[gray](-10.4,.4)--(-9,1.1);
        \draw[gray](-8,.4)--(-9.4,1.1);
        \filldraw(-9.2,1)circle(1pt);
        \draw[gray](-10.4,.9)--(-9,1.6);
        \draw[gray](-8,.9)--(-9.4,1.6);
        \filldraw(-9.2,1.5)circle(1pt);
        \draw[ultra thick](-10.2,-2)--(-10.2,2);
        \draw[ultra thick](-8.2,-2)--(-8.2,2);
    \end{tikzpicture}
    \caption{The fixed points under the torus action.}\label{fixed points}
\end{figure}
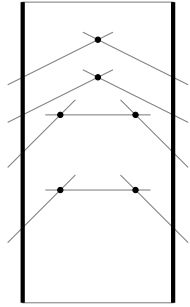

\begin{lemma}\label{T divisor}
    The torus-invariant curves on $S$ are $\wZp,\wZm,\wE_1,\ldots,\wE_4,\wF_1,\ldots,\wF_4,E_1,E_2$, and $(\pi\circ f)^{-1}(p)$ for $p\in U$ (recall \S\ref{setup}).
\end{lemma}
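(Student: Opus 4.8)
The plan is to identify the torus-invariant curves on $S$ by pulling back the structure from $\FF_2$ through the six blow-ups, using the fact that blow-ups at torus-fixed points are equivariant. First I would recall that on the toric surface $\FF_2$, with the $\CC^*$-action scaling the fibers of $\cO_{\PP^1}(2)=\FF_2\setminus\Zm$, the torus-invariant curves are exactly $\Zp$, $\Zm$, and the fibers $F_i$ over the four chosen points (together with any other single fiber, but a general fiber is not $\CC^*$-fixed as a divisor in the usual toric sense — however here we are considering the \emph{complexity-one} action, so every fiber $\pi^{-1}(q)$ is torus-invariant). I would emphasize this distinction: under the complexity-one action, a curve is torus-invariant iff it is either a fiber of $\pi$ or one of the two sections $\Zp,\Zm$.

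Next I would check that all six blow-up centers are torus-fixed points: the four points $p_1,\dots,p_4\in\Zp$ are fixed because $\Zp$ is pointwise fixed by the $\CC^*$-action (it is the zero section's complement... more precisely $\Zp$ is a component of the fixed locus, being a section meeting each fiber at a fixed point), and the two further centers $\ol E_1\cap\ol F_1$, $\ol E_2\cap\ol F_2$ lie on proper transforms of fibers, hence are fixed since the action lifts to each blow-up and fixes the exceptional curve over a fixed point pointwise in the surface case only at finitely many points — here the intersection point of two invariant curves is forced to be fixed. Since each blow-up is at a fixed point, the action lifts canonically to $\ol S$ and then to $S$, and the exceptional divisors $\ol E_i$, $E_1$, $E_2$ (and their proper transforms $\wE_i$) are torus-invariant. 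Proper transforms of torus-invariant curves are torus-invariant, so $\wZp,\wZm,\wF_1,\dots,\wF_4$ are all invariant, as is every $(\pi\circ f)^{-1}(p)$ for $p\in U$ (these are isomorphic to their image fibers, which are invariant).

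It then remains to show there are \emph{no other} torus-invariant curves. For this I would argue downstairs: if $C\subseteq S$ is an irreducible torus-invariant curve not in the listed set, then $f(C)$ is a torus-invariant irreducible curve in $\FF_2$ (the image of an invariant set is invariant, and $f$ is birational so the image is a curve unless $C$ is contracted, but the contracted curves are exactly the exceptional ones already listed). A torus-invariant irreducible curve in $\FF_2$ under the complexity-one action is either a fiber $\pi^{-1}(q)$ or a section contained in the fixed locus; the only invariant sections are $\Zp$ and $\Zm$ (any other section moves under the fiberwise scaling). If $f(C)$ is a fiber $\pi^{-1}(q)$, then either $q=\pi(p_i)$ for some $i$ — and then $C$ is the proper transform $\wF_i$ (the only invariant curve dominating that fiber besides the exceptional ones, since $C$ must map birationally onto $\pi^{-1}(q)$ and $C\neq\wE_i,E_i$ by assumption) — or $q\in U$ and $C=(\pi\circ f)^{-1}(q)$; if $f(C)=\Zp$ or $\Zm$ then $C=\wZp$ or $\wZm$. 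The main obstacle I anticipate is being careful with the two infinitely-near centers: one must verify that after blowing up $p_1$ the point $\ol E_1\cap\ol F_1$ really is $\CC^*$-fixed — this follows because both $\ol E_1$ and $\ol F_1$ are invariant curves and their intersection is a single reduced point, which must therefore be a fixed point — and symmetrically for $p_2$. Beyond that the argument is a routine bookkeeping of which invariant curves survive or get created at each of the six stages, matched against Figure \ref{figure}.
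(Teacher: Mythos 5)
Your proof is correct, but it takes a genuinely different route from the paper's. The paper argues directly on $S$: it first records that the fixed locus of the lifted $\CC^*$-action on $S$ is $\wZp\cup\wZm$ together with the six isolated points $E_1\cap\wE_1$, $E_1\cap\wF_1$, $E_2\cap\wE_2$, $E_2\cap\wF_2$, $\wE_3\cap\wF_3$, $\wE_4\cap\wF_4$, and then classifies an invariant irreducible curve $C$ according to whether it is pointwise fixed (forcing $C=\wZp$ or $\wZm$) or is the closure of a one-dimensional orbit (forcing $C$ to be one of the exceptional curves, proper-transform fibers, or a general fiber, as these are the only curves joining fixed points). You instead argue by descent along the birational morphism $f:S\to\FF_2$: lift the action because each of the six blow-up centers is fixed (either lying on the pointwise-fixed section $\Zp$ or being the intersection of two invariant curves), observe that exceptional divisors and proper transforms of invariant curves are invariant, and then push an arbitrary invariant $C$ forward to an invariant curve on $\FF_2$ where the classification (fibers, $\Zp$, $\Zm$) is immediate. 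Both arguments are standard; the paper's is shorter once one has drawn the fixed-point picture (Figure~3), while yours is more self-contained and makes the equivariance of the blow-ups explicit. One small point to tighten in your write-up: when classifying invariant curves on $\FF_2$ that dominate $\PP^1$, you should allow a priori multisections, not just sections — but the same argument (the intersection with a general fiber is a finite $\CC^*$-invariant set, hence contained in the two fixed points of that fiber) shows such a curve lies in $\Zp\cup\Zm$ and must therefore equal $\Zp$ or $\Zm$.
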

\begin{proof}
    Notice that the set of fixed points under the torus action consists of $\wZp\cup\wZm$ and six points $E_1\cap\wE_1,E_1\cap\wF_1,E_2\cap\wE_2,E_2\cap\wF_2,\wE_3\cap\wF_3,\wE_4\cap\wF_4$ (see Figure \ref{fixed points}). Let $C$ be a torus-invariant curve. If $C$ is fixed by the torus action, then $C=\wZp$ or $\wZm$. If $C$ is not fixed by the torus action, then $C$ contains a one-dimensional orbit. In that case, $C=\wE_1,\ldots,\wE_4,\wF_1,\ldots,\wF_4,E_1,E_2$, or $(\pi\circ f)^{-1}(p)$ for $p\in U$.
\end{proof}

\begin{lemma}\label{alpha_k simp}
    For $k\in\NN_+$,
    $$
        \alpha_k\left(L\right)=\inf_Dk\lct\left(D\right),
    $$
    where the infimum is taken over divisors of the form
    $$
        D=a_1\wZp+a_2\wZm+a_3\wE_1+\cdots+a_6\wE_4+a_7\wF_1+\cdots+a_{10}\wF_4+a_{11}E_1+a_{12}E_2\sim kL.
    $$
\end{lemma}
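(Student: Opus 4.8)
The plan is to pass, via the T-variety results, to torus-invariant divisors, then to notice that the relevant configuration of curves on $S$ is normal crossing — so that any log canonical threshold is dictated purely by coefficients (Proposition \ref{nc lct}) — and finally to trade every general fiber appearing in such a divisor for a combination of the twelve negative curves without raising the threshold. Concretely, I would first invoke Theorem \ref{T-variety}: every effective $D\sim kL$ is dominated in $\lct$ by a torus-invariant effective divisor linearly equivalent to $kL$, so the infimum in \eqref{alpha_k def} is unchanged if we restrict to torus-invariant effective divisors. By Lemma \ref{T divisor} the torus-invariant prime divisors on $S$ are the twelve curves $\wZp,\wZm,\wE_1,\ldots,\wE_4,\wF_1,\ldots,\wF_4,E_1,E_2$ together with the fibers $\wF_p:=(\pi\circ f)^{-1}(p)$, $p\in U$; hence any torus-invariant effective $D\sim kL$ has the shape $D=\sum_i a_iC_i+\sum_{j=1}^m b_j\wF_{p_j}$, where $C_i$ runs over the twelve curves, $a_i,b_j\in\NN$, and $p_1,\ldots,p_m\in U$ are distinct. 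If $m=0$ this is already of the form claimed in the lemma, so I may assume $m\geq1$ and put $B:=\sum_j b_j$.

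Next I would establish that $\Supp D$ is normal crossing. From the intersection matrix \eqref{intersection matrix} and the linear equivalences \eqref{linear equivalences} one reads off that, among the twelve curves, the only nonzero pairwise intersection numbers are the transverse incidences shown in Figure \ref{negative curves} — namely $\wZp\cdot\wE_i=1$ for $1\le i\le4$, $\wE_i\cdot E_i=\wF_i\cdot E_i=1$ for $i=1,2$, $\wE_i\cdot\wF_i=1$ for $i=3,4$, and $\wZm\cdot\wF_i=1$ for $1\le i\le4$ — and that a general fiber $\wF_p$ meets only $\wZp$ and $\wZm$, each transversally and at a point that varies with $p$. Since the $p_j$ are distinct, no point of $S$ lies on three of the components of $D$ and every intersection is a transverse node, so $D$ is a normal crossing $\QQ$-divisor; Proposition \ref{nc lct} then gives $\lct(D)=\bigl(\max(\max_i a_i,\,\max_j b_j)\bigr)^{-1}$.

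Finally I would remove the fibers. Because $p_j\in U$, the fiber $\wF_{p_j}=f^*\bigl(\pi^{-1}(p_j)\bigr)$ is linearly equivalent to $f^*F=\wF$, and $\wF\sim\wF_3+\wE_3$ by \eqref{linear equivalences}; hence $D'':=\sum_i a_iC_i+B\wF_3+B\wE_3$ is effective, torus-invariant, linearly equivalent to $D\sim kL$, and supported on the twelve curves, so it is of the form asked for in the lemma. It is again normal crossing, and its largest coefficient is at least $\max(\max_i a_i,\,B)\geq\max(\max_i a_i,\,\max_j b_j)$, so Proposition \ref{nc lct} yields $\lct(D'')\leq\lct(D)$. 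Taking the infimum over torus-invariant $D\sim kL$ then gives the asserted description of $\alpha_k(L)$.

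The one place where real care is needed is the normal-crossing claim: one must rule out triple points and tangencies among the twelve negative curves and the fibers attached to $\wZp$ and $\wZm$. I expect this to be routine but slightly tedious — it is forced by the intersection numbers of \S\ref{Pic} together with the observation that every incidence among these curves is produced by blowing up a smooth point of a smooth surface, hence is a transverse node, and that the various exceptional loci and sections sit over distinct points. Granting this, the reduction via Proposition \ref{nc lct} and the fiber-for-curves swap are both immediate.
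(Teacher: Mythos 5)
Your proof is correct and takes essentially the same route as the paper: apply Theorem \ref{T-variety} and Lemma \ref{T divisor} to reduce to torus-invariant divisors, then use $\wF\sim\wF_i+\wE_i$ together with Proposition \ref{nc lct} to absorb the general-fiber contributions into the twelve listed curves without raising the log canonical threshold. The only (cosmetic) differences are that you absorb the fibers into $\wE_3,\wF_3$ whereas the paper uses $\wE_4,\wF_4$, and that you spell out the normal-crossing verification that the paper treats as evident when invoking Proposition \ref{nc lct}.
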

\begin{proof}
    By Theorem \ref{T-variety} and Lemma \ref{T divisor}, it suffices to look at divisors of the form
    $$
        D=a_1\wZp+a_2\wZm+a_3\wE_1+\cdots+a_6\wE_4+a_7\wF_1+\cdots+a_{10}\wF_4+a_{11}E_1+a_{12}E_2+\sum_{p\in U}a_p\left(\pi\circ f\right)^{-1}\left(p\right).
    $$
    We may drop the last term. Indeed, by \eqref{linear equivalences},
    \begin{multline*}
        D\sim D':=a_1\wZp+a_2\wZm+a_3\wE_1+a_4\wE_2+a_5\wE_3+\left(a_6+\sum_{p\in U}a_p\right)\wE_4\\
        +a_7\wF_1+a_8\wF_2+a_9\wF_3+\left(a_{10}+\sum_{p\in U}a_p\right)\wF_4+a_{11}E_1+a_{12}E_2.
    \end{multline*}
    By Proposition \ref{nc lct}, $\lct(D')\leq\lct(D)$. This completes the proof.
\end{proof}

\section{The computation of \texorpdfstring{$\alpha_k$}{alphak}-invariants}\label{computation}

In this section we finish the computation of $\alpha_k$-invariants, providing an explicit counterexample to Tian's stabilization conjecture.

\begin{theorem}\label{surface main}
    Let $(S,L)$ be as in \S\ref{setup}. Then
    $$
        \alpha_k\left(L\right)=\begin{cases}
            \dfrac{1}{8},&k\text{ is even};\\
            \dfrac{k}{8k-1},&k\text{ is odd},
        \end{cases}
    $$
    and
    $$
        \alpha\left(L\right)=\frac{1}{8}.
    $$
    In particular, $\alpha_k(L)\geq\alpha(L)$ with equality if and only if $k$ is even, and $\alpha_k(L)$ is not eventually monotone.
\end{theorem}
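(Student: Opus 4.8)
The plan is to reduce, via Lemma~\ref{alpha_k simp}, the computation of each $\alpha_k(L)$ to a finite integer optimization problem, solve that problem over the reals by a short case analysis, and then pin down the integer answer with a parity argument. Concretely: the twelve curves appearing in Lemma~\ref{alpha_k simp} form a normal crossing configuration (Figure~\ref{negative curves}), so Proposition~\ref{nc lct} gives $k\lct(D)=k/\max_i a_i$ for every competitor $D=a_1\wZp+a_2\wZm+a_3\wE_1+\cdots+a_6\wE_4+a_7\wF_1+\cdots+a_{10}\wF_4+a_{11}E_1+a_{12}E_2\sim kL$. Hence $\alpha_k(L)=k/N_k$, where $N_k$ is the largest value of $\max_i a_i$ over effective integral such $D$. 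To make $D\sim kL$ explicit, I first express $L$ in the basis $[\wZp],[\wF],[\wE_1],\ldots,[\wE_4],[E_1],[E_2]$ of $\Pic(S)$ using \eqref{linear equivalences} (cross-checking against \eqref{L intersection}); the condition $D\sim kL$ then becomes eight linear equations, and solving them makes $a_1$ and $a_7,\ldots,a_{12}$ affine functions of $a_2,\ldots,a_6$ — in particular $a_1=4k-a_2$, $a_7=a_2+a_3-2k$, $a_{11}=a_2+2a_3-3k$, $a_{12}=a_2+2a_4-3k$ — subject to the single relation $a_3+a_4+a_5+a_6=9k-2a_2$ together with $a_i\ge0$.

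The next step is to bound $\max_i a_i$ over all \emph{real} feasible points. One gets $a_1,a_2\le 4k$ at once; substituting the forced lower bounds on $a_3,\ldots,a_6$ (namely $\max(0,2k-a_2,(3k-a_2)/2)$ for $a_3,a_4$ and $\max(0,2k-a_2)$ for $a_5,a_6$) into the relation $a_3+a_4+a_5+a_6=9k-2a_2$ and splitting into the cases $a_2\in[0,k],[k,2k],[2k,3k],[3k,\infty)$ yields $a_3,\ldots,a_{10}\le\tfrac92 k$ and hence $a_{11},a_{12}\le 8k$, with equality $a_{11}=8k$ (resp.\ $a_{12}=8k$) only at $a_2=2k$, $a_3=\tfrac92 k$, $a_4=\tfrac12 k$, $a_5=a_6=0$ (resp.\ the same with $a_3$ and $a_4$ interchanged). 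So $N_k\le 8k$ in all cases, and the value $8k$ can be reached only through the coordinate $a_{11}$ or $a_{12}$.

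For $k$ even the equality point above is integral and effective, so $N_k=8k$ and $\alpha_k(L)=\tfrac18$. For $k$ odd the coordinates $\tfrac92 k$ and $\tfrac12 k$ forced at any maximizer are non-integral, so $N_k\le 8k-1$; conversely $a_2=2k$, $a_3=\tfrac{9k-1}{2}$, $a_4=\tfrac{k+1}{2}$, $a_5=a_6=0$ is integral and effective with $a_{11}=8k-1$, so $N_k=8k-1$ and $\alpha_k(L)=\tfrac{k}{8k-1}$. Then $\alpha(L)=\inf_k\alpha_k(L)=\tfrac18$ because $\tfrac{k}{8k-1}>\tfrac18$ for every $k$; in particular $\alpha_k(L)\ge\alpha(L)$ with equality exactly for even $k$, and for any $k_0$ an even $k_1\ge k_0$ gives $\alpha_{k_1}(L)=\tfrac18<\alpha_{k_1+1}(L)$, so $\{\alpha_k(L)\}$ is neither eventually monotone nor eventually constant.

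The main obstacle is the second step: establishing the sharp a priori bound $\max_i a_i\le 8k$ together with a complete description of its equality locus, because that single fact simultaneously delivers the even-$k$ value and — through the half-integers $\tfrac92 k$, $\tfrac12 k$ — the odd-$k$ parity obstruction. The rest (the Picard-group bookkeeping, exhibiting the explicit optimal divisors, and deducing the consequences about $\alpha$, monotonicity and non-stabilization) is routine.
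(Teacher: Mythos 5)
Your proposal is correct, and it solves the same integer linear program as the paper's proof, but it attacks the feasibility constraints in a noticeably different way. Both proofs begin with Lemma~\ref{alpha_k simp} and Proposition~\ref{nc lct}, which reduce $\alpha_k(L)$ to minimizing $k/\max_i a_i$ over effective integral torus-invariant $D\sim kL$. From there the paper produces the linear relations \eqref{12}--\eqref{510} by intersecting $D$ with cleverly chosen effective combinations (e.g.\ $2\wZp+\wE_1+\wE_2+2\wE_3$) whose intersection with $kL$ is exactly the target bound $8k$; nonnegativity of the unused coefficients then hands over $a_i\le 4k,5k,8k$ for free, and the odd-$k$ obstruction is a one-line contradiction ($a_{11}=8k\Rightarrow a_5=a_6=a_9=a_{10}=a_{12}=0\Rightarrow 2a_4=k$). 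You instead eliminate variables to parametrize the feasible set by $(a_2,\ldots,a_6)$ subject to a single affine relation, then run an LP case analysis in $a_2$ to locate the maximum of each $a_i$ and, crucially, to show that the extremal facet $a_{11}=8k$ (resp.\ $a_{12}=8k$) is a single vertex with the half-integral coordinates $a_3=\tfrac92k$, $a_4=\tfrac12k$. Your route is more mechanical and gives a complete picture of the feasible polytope — in particular it exposes \emph{why} the odd-$k$ obstruction appears (the optimal vertex is non-lattice), which the paper's contradiction does not make visible — at the cost of a longer case split. The paper's route buys brevity by guessing the right dual multipliers up front.

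One place to tighten the wording: the inference ``$a_3,\ldots,a_{10}\le\tfrac92 k$ and hence $a_{11},a_{12}\le 8k$'' is not literally a consequence, since $a_{11}=a_2+2a_3-3k$ also depends on $a_2$ (e.g.\ $a_3\le\tfrac92k$ with $a_2=4k$ would only give $a_{11}\le 10k$ without the coupling relation $a_3+a_4+a_5+a_6=9k-2a_2$). The bound $a_{11}\le 8k$ must be derived jointly from the case analysis in $a_2$ — eliminating $a_3$ to write $a_{11}=15k-3a_2-2a_4-2a_5-2a_6$ and then substituting the lower bounds on $a_4,a_5,a_6$ makes the casework transparent and also directly establishes uniqueness of the extremal vertex, which your odd-$k$ parity step relies on. Your downstream conclusions, including the explicit even- and odd-$k$ optimal divisors (which coincide with the paper's $D'$ and $D''$) and the deductions about $\alpha$, non-monotonicity and non-stabilization, are all correct.
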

\begin{proof}
    Let
    $$
        D=a_1\wZp+a_2\wZm+a_3\wE_1+\cdots+a_6\wE_4+a_7\wF_1+\cdots+a_{10}\wF_4+a_{11}E_1+a_{12}E_2\sim kL,
    $$
    where $a_1,\ldots,a_{12}\in\NN_+$. By \eqref{intersection matrix} and \eqref{linear equivalences} (see also Figure \ref{negative curves}),
    \begin{align*}
        &D\cdot\wF=a_1+a_2,\\
        &D\cdot\left(-\wE_2+\wE_3\right)=2a_4-a_5+a_9-a_{12},\\
        &D\cdot\left(-\wE_1+\wE_3\right)=2a_3-a_5+a_9-a_{11},\\
        &D\cdot\left(\wZp+\wE_3+\wE_4\right)=a_3+a_4+a_9+a_{10},\\
        &D\cdot\left(\wZm+\wF_3+\wE_4\right)=a_5+a_6+a_7+a_8,\\
        &D\cdot\left(2\wZp+\wE_1+\wE_2+2\wE_3\right)=2a_6+2a_9+a_{11}+a_{12},\\
        &D\cdot\left(2\wZp+\wE_1+\wE_2+2\wE_4\right)=2a_5+2a_{10}+a_{11}+a_{12}.\\
    \end{align*}
    Since $D\sim kL$, combining with \eqref{L intersection} we get
    \begin{align}
        &a_1+a_2=4k,\label{12}\\
        &2a_4-a_5+a_9-a_{12}=k,\label{45912}\\
        &2a_3-a_5+a_9-a_{11}=k,\label{35911}\\
        &a_3+a_4+a_9+a_{10}=5k,\label{34910}\\
        &a_5+a_6+a_7+a_8=5k,\label{5678}\\
        &2a_6+2a_9+a_{11}+a_{12}=8k,\label{69}\\
        &2a_5+2a_{10}+a_{11}+a_{12}=8k.\label{510}
    \end{align}
    By \eqref{12},
    \begin{equation}\label{ub12}
        a_1,a_2\leq4k.
    \end{equation}
    By \eqref{34910} and \eqref{5678}, \begin{equation}\label{ub345678910}
        a_3,\ldots,a_{10}\leq 5k.
    \end{equation}
    By \eqref{69} or \eqref{510},
    \begin{equation}\label{ub1112}
        a_{11},a_{12}\leq8k.
    \end{equation}

    Combining \eqref{ub12}, \eqref{ub345678910}, and \eqref{ub1112}, by Proposition \ref{nc lct}, 
    $$
        \lct\left(D\right)\geq\left(8k\right)^{-1}.
    $$
    Since $D$ is arbitrary, by Lemma \ref{alpha_k simp},
    \begin{equation}\label{even bound}
        \alpha_k\left(L\right)\geq\frac{1}{8}.
    \end{equation}
    If $k$ is even, consider the divisor
    $$
        D':=2k\wZp+2k\wZm+\frac{9k}{2}\wE_1+\frac{9k}{2}\wF_1+8kE_1+\frac{k}{2}\wE_2+\frac{k}{2}\wF_2.
    $$
    Using \eqref{linear equivalences} we can check that $D'\sim kL$. By Proposition \ref{nc lct},
    $$
        \lct\left(D'\right)=\left(8k\right)^{-1}.
    $$
    By Lemma \ref{alpha_k simp} and \eqref{even bound},
    $$
        \alpha_k\left(L\right)=\frac{1}{8}.
    $$

    Finally, let $k$ be odd. We claim that $a_{11},a_{12}\neq8k$. Indeed, assume $a_{11}=8k$. By \eqref{69} and \eqref{510}, 
    $$
        a_5=a_6=a_9=a_{10}=a_{12}=0.
    $$
    Hence by \eqref{45912},
    $$
        2a_4=k,
    $$
    contradicting the fact that $k$ is odd. Similarly, assume $a_{12}=8k$. By \eqref{69} and \eqref{510},
    $$
        a_5=a_6=a_9=a_{10}=a_{11}=0.
    $$
    Hence by \eqref{35911},
    $$
        2a_3=k,
    $$
    contradicting the fact that $k$ is odd. Therefore, \eqref{ub1112} can be strengthened to
    $$
        a_{11},a_{12}\leq8k-1.
    $$
    Combining with \eqref{ub12} and \eqref{ub345678910}, by Proposition \ref{nc lct},
    $$
        \lct\left(D\right)\geq\left(8k-1\right)^{-1}.
    $$
    Since $D$ is arbitrary, by Lemma \ref{alpha_k simp},
    \begin{equation}\label{odd bound}
        \alpha_k\left(L\right)\geq\frac{k}{8k-1}.
    \end{equation}
    On the other hand, consider the divisor
    $$
        D'':=2k\wZp+2k\wZm+\frac{9k-1}{2}\wE_1+\frac{9k-1}{2}\wF_1+\left(8k-1\right)E_1+\frac{k+1}{2}\wE_2+\frac{k+1}{2}\wF_2+E_2.
    $$
    Using \eqref{linear equivalences} we can check that $D''\sim kL$. By Proposition \ref{nc lct},
    $$
        \lct\left(D''\right)=\left(8k-1\right)^{-1}.
    $$
    By Lemma \ref{alpha_k simp} and \eqref{odd bound},
    $$
        \alpha_k\left(L\right)=\frac{k}{8k-1}.
    $$
    This completes the proof.
\end{proof}

\end{document}